\newcommand{\ch}{\operatorname{ch}}
\newcommand{\Hom}{\operatorname{Hom}}
\DeclareMathOperator{\Coh}{Coh}
\newtheorem{theorem}{Theorem}[section]
\newtheorem{lemma}[theorem]{Lemma}
\newtheorem{corollary}[theorem]{Corollary}
\theoremstyle{definition}
\newtheorem{remark}[theorem]{Remark}
\newtheorem{definition}[theorem]{{\bf Definition}}
\title[A note on Kodaira vanishing]{A note on Kodaira vanishing on surfaces}
\author[Cristian Martinez]{Cristian Martinez}
\address{School of Engineering, Science and Technology\\
Universidad del Rosario \\
Carrera 6 No. 12C-16\\
111711, Bogot\'a\\
Colombia}
\email{cristianm.martinez@urosario.edu.co}
\keywords{Kodaira vanishing, stability conditions}
\subjclass[2020]{Primary 14D20, 14J27; Secondary: 14F08}
\date{}
\begin{document}

\begin{abstract}
    We give a proof of the Kodaira vanishing theorem on smooth complex surfaces using geometric stability conditions. Likewise, we give a new proof of a result of Xie \cite{XieCounter} characterizing the counterexamples of the Kodaira vanishing theorem in positive characteristic. 
\end{abstract}

\maketitle

\section{Introduction}
A celebrated theorem of Kodaira \cite{KodairaVanishingOriginal} asserts that given an ample line bundle $H$ on a smooth projective variety $X$ over an algebraically closed field of characteristic zero then $H^j(X;H\otimes K_X)=0$ for all $j>0$. The proof of Kodaira's vanishing result uses transcendental methods and there is not an algebraic proof that is characteristic independent. In fact, counterexamples were later constructed by Raynaud \cite{RaynaudCounter} on smooth projective surfaces over algebraic closed fields of characteristic $p>0$. Generalizations of Raynaud's result appear in \cite{MukaiCounter,EKeCounter,Deligne1987Counter,ZHENG2017counter}, and furthermore, a characterization of such counterexamples for the case of surfaces was found in \cite{XieCounter}, where the author proves that a counterexample to Kodaira's result in dimension two can only exist when the surface is of general type or is a quasi-elliptic surface of Kodaira dimension 1.

In this note, we will obtain a similar result using the construction of stability conditions that derive from a recent result of Koseki \cite[Theorem 3.5]{Kos22} about the existence of a Bogomolov-type inequality on smooth projective surfaces over algebraically closed fields of positive characteristic. The precise statement is

\begin{theorem}\label{mainthm}
    Let $X$ be a smooth projective surface over an algebraic closed field $k$. There is a constant $C_{[X]}\geq 0$, depending only on the birational class of $X$ and that vanishes unless $char(k)>0$ and $X$ is either of general type or a quasielliptic surface of Kodaira dimension 1 (see Theorem \ref{KosekiConstant} for the precise definition), such that if $H$ is an ample line bundle with $H^2>6C_{[X]}$ then $H^1(H\otimes K_X)=0$.
\end{theorem}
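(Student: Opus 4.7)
The plan is to deduce the stated vanishing from a phase comparison in a Bridgeland stability condition built from Koseki's Bogomolov-type inequality. By Serre duality one has $H^1(X,H\otimes K_X)\cong H^1(X,H^{-1})^{\ast}$, so it suffices to show that $\Hom_{\cald^b(X)}(\OX, H^{-1}[1])=0$. Observe that for any real parameter $b\in(-1,0)$, both $\OX$ and $H^{-1}[1]$ lie in the tilted heart $\calb^{bH}$ obtained from the $\mu_H$-slope torsion pair on $\Coh(X)$ with tilt parameter $bH$; thus computing the above Hom inside this heart is legitimate.

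Next I would invoke Koseki's modified Bogomolov inequality (Theorem 3.5 of \cite{Kos22}) to produce a family of Bridgeland stability conditions $\sigma_{b,t}=(Z_{b,t},\calb^{bH})$ on $\cald^b(X)$, with central charge of the standard form $Z_{b,t}(E)=-\int_X e^{-(b+it)H}\ch(E)$, defined for $(b,t)$ in an appropriate region of the upper half plane. The support property of $\sigma_{b,t}$ is governed by the constant $C_{[X]}$ and reduces to the classical Bridgeland--Arcara--Bertram construction whenever $C_{[X]}=0$.

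A direct computation at the symmetric choice $b=-1/2$ shows that $\phi_{b,t}(\OX)>\phi_{b,t}(H^{-1}[1])$ on the open semidisk
$$\bigl\{(b,t):\,(b+\tfrac{1}{2})^2+t^2<\tfrac{1}{4},\ t>0\bigr\},$$
whose boundary is a semicircle of radius $1/2$ centred at $(-1/2,0)$. Granting $\sigma_{b,t}$-semistability of both objects at some point of this semidisk, the phase inequality forces $\Hom_{\calb^{bH}}(\OX,H^{-1}[1])=0$ and the proof is complete.

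The main obstacle, then, is to certify $\sigma_{b,t}$-semistability of $\OX$ and of $H^{-1}[1]$ somewhere inside the above semidisk. Any potential destabilizing subobject in $\calb^{bH}$ must satisfy the Koseki-modified inequality, and this constrains the possible numerical walls to a bounded region whose extent scales like $C_{[X]}/H^2$. The key estimate is to show that under $H^2>6C_{[X]}$ no such wall can reach the semidisk; the factor $6$ should emerge from explicitly bounding the radii of the candidate wall semicircles in terms of $C_{[X]}$ and the discriminants of the potential destabilizers, exploiting the fact that $\OX$ and $H^{-1}[1]$ both have vanishing discriminant and symmetric Chern data about $b=-1/2$. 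Once this wall-avoidance lemma is in place, choosing any $(b,t)$ inside the semidisk that lies outside every wall of $\OX$ and $H^{-1}[1]$ completes the argument.
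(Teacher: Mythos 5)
Your overall strategy is the one the paper uses: Serre duality plus Schur's lemma for a tilt stability condition built on Koseki's inequality, with the phase inequality holding exactly on the semidisk $t^2<\beta(1-\beta)$ and the symmetric choice of tilt parameter. (The paper works with $\Hom(H,\mathcal{O}[1])$ and $B=\beta H$, $\beta\in(0,1)$; your version with $\Hom(\mathcal{O}_X,H^{-1}[1])$ and $b\in(-1,0)$ is the same picture twisted by $\otimes H^{-1}$, and your semidisk centred at $b=-1/2$ matches the paper's condition $\frac{C_{[X]}}{H^2}+\alpha^2<\beta(1-\beta)$ at $\beta=1/2$ under the reparametrization $t^2=\frac{C_{[X]}}{H^2}+\alpha^2$.)

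The genuine gap is that you never prove the wall-avoidance estimate; you only assert that "the factor $6$ should emerge." That estimate is the technical heart of the argument, and it is where a real subtlety lives that your sketch glosses over: you write that $\mathcal{O}_X$ and $H^{-1}$ have "vanishing discriminant," but that is only true for the classical $\Delta_{B,H}$. The quantity controlled by Koseki's inequality is $\widetilde{\Delta}_{B,H}=\Delta_{B,H}+C_{[X]}\ch_0^2$, which for a rank-one object equals $C_{[X]}>0$; this is precisely why line bundles can have nontrivial walls in positive characteristic and why Kodaira vanishing can fail there. The missing lemma (the paper's Lemma \ref{estimates mini-walls char p}, generalizing \cite[Lemma 4.8]{LoMar23}) states that a $B$-twisted Gieseker semistable sheaf $E$ with $\ch_0(E)>0$ and $H\ch_1^B(E)>0$ is $\nu^{C_{[X]}}_{B,H,\alpha}$-semistable once $\alpha^2H^2>\mu_{B,H}(E)\widetilde{\Delta}_{B,H}(E)$; its proof requires showing first that any destabilizer $A$ in the tilted heart is a sheaf of positive rank with $0<\mu_{B,H}(A)<\mu_{B,H}(E)$, and that $A$ itself satisfies $\widetilde{\Delta}_{B,H}(A)\geq 0$ on the wall (which is not automatic for non-sheaf subobjects and needs the support-property argument of Lemma \ref{numcharpDiscriminant}). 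Applied to $H$ at $\beta=1/2$ (and to $\mathcal{O}[1]$ via the duality of Lemma \ref{duality}) this gives the lower bound $\alpha^2H^2>\tfrac12 C_{[X]}$, which together with the upper bound $\alpha^2H^2<\tfrac14H^2-C_{[X]}$ from the phase inequality yields a nonempty range exactly when $H^2>6C_{[X]}$. Without carrying out this estimate your proof does not produce the constant $6$, which is the content of the theorem.
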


Our result, in particular, also gives an algebraic proof of Kodaira vanishing in the case of surfaces in characteristic zero. The main technique used in the proof of Theorem \ref{mainthm} is Schur's lemma (Lemma \ref{schur}) and a generalization to positive characteristic of the bounds for the Gieseker chamber obtained in \cite{LoMar23} in the characteristic zero case (Lemma \ref{estimates mini-walls char p}).

\subsection*{Acknowledgments} This note grew as part of the mini-course ``Bridgeland stability conditions and applications'' given by the author at the LEGAL (Libertade em Geometria Alg\'ebrica) meeting in Teres\'opolis, Rio de Janeiro, Brazil, in 2024. The author would like to express his gratitude to the organizers of the LEGAL meeting for the invitation and for a great conference. Also, he would like to thank Omprokash Das, Marcos Jardim, Jason Lo, and Yun Shi for conversations about the topic of this article.

\section{Stability functions and Schur's lemma}
\begin{definition}\label{slopes}
    let $\mathcal{A}$ be an abelian category. An additive function on $\mathcal{A}$ is a homomorphism $f\colon K(\mathcal{A})\rightarrow \mathbb{R}$, where $K(\mathcal{A})$ denotes the Grothendieck group of $\mathcal{A}$. Given a pair of additive functions $d,r\colon K(\mathcal{A})\rightarrow \mathbb{R}$ satisfying the following positivity property:
    \begin{enumerate}
        \item $r(A)\geq 0$ for all $A\in\mathcal{A}$,
        \item if $r(A)=0$ for a nonzero object $A\in \mathcal{A}$ then $d(A)> 0$,
    \end{enumerate}
    we can define the associated slope function
    $$
    \mu_{d,r}(A)=\begin{cases}
        \frac{d(A)}{r(A)} &\text{if}\ r(A)\neq 0\\
        +\infty &\text{otherwise}.
        \end{cases}
    $$
    We say that an object $A\in\mathcal{A}$ is $\mu_{d,r}$-semistable if for every inclusion $A'\hookrightarrow A$ in $\mathcal{A}$ (with $A'$ nonzero) we have $\mu_{d,r}(A')\leq \mu_{d,r}(A)$. We say that $A$ is $\mu_{d,r}$-stable if $A$ is $\mu_{d,r}$-semistable and if the inequality above is always strict unless $A'=A$.
\end{definition}

\begin{definition}
    Let $X$ be a smooth projective variety. A stability condition on $X$ is a pair $\sigma=(Z,\mathcal{A})$, where:
    \begin{enumerate}
        \item $\mathcal{A}\subset D^b(X)$ is the heart of a bounded t-structure;
        \item $Z\colon K(\mathcal{A})\rightarrow \mathbb{C}$ is a homomorphism such that 
        $$
        Z(E)\in \mathbb{H}=\{z\in \mathbb{C}\setminus \{0\}\colon \mbox{arg}(z)\in (0,\pi]\},
        $$
        i.e., $-\mathfrak{Re}(Z)$ and $\mathfrak{Im}(Z)$ satisfy the positivity property of Definition \ref{slopes};
        \item Each non-zero object in $\mathcal{A}$ has a unique Harder-Narasimhan filtration with respect to the slope 
        $$
        \mu_Z(E)=\begin{cases}
        \frac{-\mathfrak{Re}Z(E)}{\mathfrak{Im}Z(E)} &\text{if}\ \mathfrak{Im}Z(E)\neq 0\\
        +\infty &\text{if}\ \mathfrak{Im}Z(E)=0
        \end{cases};
        $$
        \item There exist a finite rank lattice $\Gamma$ and a surjective homomorphism 
        $$
        v\colon K(\mathcal{A})\rightarrow \Gamma
        $$ 
        such that $Z$ factors through $v$, i.e., there exists a homomorphism $\hat{Z}\colon \Gamma\rightarrow \mathbb{C}$ such that $Z=\hat{Z}\circ v$;
        \item There exists a quadratic form $Q$ on $\Gamma\otimes_{\mathbb{Z}} \mathbb{R}$ such that 
        \begin{enumerate}
        \item $Q(v(E))\geq 0$ for all $\mu_Z$-semistable objects $E\in\mathcal{A}$,
        \item $Q(v(E))<0$ for all nonzero object $E\in Ker(Z)$.
        \end{enumerate}
    \end{enumerate}
\end{definition}

Let us denote by $\mbox{Stab}^{\Gamma}(X)$ the set of  stability conditions whose central charge factors through $\Gamma$. This set is indeed a complex manifold \cite{B07}. 

One of the first properties of stable objects is that they satisfy the following vanishing:

\begin{lemma}[Schur's Lemma]\label{schur} Suppose that $\sigma=(Z,\mathcal{A})$ is a stability condition on a smooth projective variety $X$, and that $F,E\in\mathcal{A}$ are $\mu_Z$-stable objects such that $\mu_Z(F)>\mu_Z(E)$. Then $\Hom_{\mathcal{A}}(F,E)=0$. 
\end{lemma}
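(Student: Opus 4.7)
The plan is to argue by contradiction: suppose $\varphi\colon F\to E$ is a nonzero morphism in $\mathcal{A}$, and derive $\mu_Z(F)\leq\mu_Z(E)$. Since $\mathcal{A}$ is abelian, the morphism $\varphi$ factors through its image as $F\twoheadrightarrow I\hookrightarrow E$, and we have the short exact sequence
$$0\longrightarrow K\longrightarrow F\longrightarrow I\longrightarrow 0$$
in $\mathcal{A}$, where $K=\ker(\varphi)$. Note that $I$ is nonzero because $\varphi\neq 0$, and $K$ is a proper subobject of $F$ for the same reason.

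First I would use stability of $F$ to sandwich the slope of $I$ from below. If $K=0$ then $F\cong I$ and $\mu_Z(F)=\mu_Z(I)$. Otherwise $K$ is a nonzero proper subobject of the stable object $F$, so $\mu_Z(K)<\mu_Z(F)$. Applying the additivity of $Z$ on the short exact sequence, $Z(F)=Z(K)+Z(I)$, and taking real and imaginary parts (both imaginary parts are nonnegative since $K,I$ lie in the heart $\mathcal{A}$), the standard seesaw computation gives $\mu_Z(K)<\mu_Z(F)\Longleftrightarrow \mu_Z(F)<\mu_Z(I)$. Either way we obtain $\mu_Z(F)\leq \mu_Z(I)$. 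Next, $I$ is a nonzero subobject of the stable object $E$, so by the defining inequality of stability $\mu_Z(I)\leq \mu_Z(E)$. Chaining these inequalities yields $\mu_Z(F)\leq \mu_Z(E)$, contradicting the hypothesis $\mu_Z(F)>\mu_Z(E)$.

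The only genuine subtlety is the case in which one of the slopes equals $+\infty$, i.e.\ $\mathfrak{Im}\,Z$ vanishes on one of the pieces. This is handled by the same additivity: since $\mathfrak{Im}\,Z$ is nonnegative on $\mathcal{A}$, if $\mathfrak{Im}\,Z(F)=0$ then $\mathfrak{Im}\,Z(K)=\mathfrak{Im}\,Z(I)=0$ as well, so $\mu_Z(I)=+\infty$ and the seesaw step above still delivers $\mu_Z(F)\leq \mu_Z(I)$; a symmetric remark takes care of $\mathfrak{Im}\,Z(E)=0$. I expect no serious obstacle in the proof — the whole argument is a couple of lines once the seesaw inequality is written down — and the main thing to be careful about is simply keeping track of the convention $\mu_Z=+\infty$ when $\mathfrak{Im}\,Z=0$ so that the strict inequalities furnished by stability of $F$ and $E$ are applied in the right direction.
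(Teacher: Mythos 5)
Your proposal is correct and follows essentially the same route as the paper: factor the nonzero morphism through its image, use the short exact sequence $0\to K\to F\to I\to 0$ together with additivity of $Z$ to run the seesaw inequality, and combine with stability of $E$ applied to the subobject $I$ to contradict $\mu_Z(F)>\mu_Z(E)$. Your handling of the $\mu_Z=+\infty$ edge cases via nonnegativity of $\mathfrak{Im}\,Z$ on the heart matches the paper's treatment in substance, so there is nothing to add.
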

\begin{proof}
    The proof of this lemma is an easy exercise, but we include it here for completeness. 
    
    Suppose that $\varphi\colon F\rightarrow E$ is a non-zero morphism in $\mathcal{A}$ and let $M=image(\varphi)$. Then we have a surjection $\pi\colon F\twoheadrightarrow M$ and an inclusion $\iota\colon M\hookrightarrow E$, both on $\mathcal{A}$, so that $\varphi=\iota\circ \pi$. Notice that $\mu_Z(E)<\mu_Z(F)\leq +\infty$ by hypothesis. Therefore, since $M\neq 0$ it follows from the stability of $E$ that $\mu_Z(M)<\mu_Z(E)$ unless $M=E$. On the other hand, let $K=ker(\pi)$, then we have the short exact sequence in $\mathcal{A}$
    $$
    0\rightarrow K\rightarrow F\rightarrow M\rightarrow 0,
    $$
    from which we obtain
    $$
    Z(F)=Z(K)+Z(M).
    $$
    Notice that $K\neq F$ because $\varphi$ is non-zero. Also, $K\neq 0$ since otherwise this would imply that $F\cong M$ and so $\mu_Z(F)=\mu_Z(M)\leq \mu_Z(E)$, a contradiction to our hypothesis. Moreover, if $\mu_Z(F)=+\infty$ then the stability of $F$ would force it to be simple and we would have $K=0$, which is impossible. Thus, we have $\mu_Z(K)<\mu_Z(F)<+\infty$, which is equivalent to
    \begin{align*}
        -\mathfrak{Re}(Z(K))\mathfrak{Im}(Z(F))&<-\mathfrak{Re}(Z(F))\mathfrak{Im}(Z(K))\\
         -(\mathfrak{Re}(Z(F))-\mathfrak{Re}(Z(M)))\mathfrak{Im}(Z(F))&<-\mathfrak{Re}(Z(F))(\mathfrak{Im}(Z(F))-\mathfrak{Im}(Z(M)))\\
         \mathfrak{Re}(Z(M))\mathfrak{Im}(Z(F))&<\mathfrak{Re}(Z(F))\mathfrak{Im}(Z(M)),
    \end{align*}
    i.e., $\mu_Z(F)<\mu_Z(M)\leq \mu_Z(E)$, a contradiction.
\end{proof}


\section{A quick review of stability conditions on surfaces in characteristic zero}
Now, let $X$ be a smooth projective surface over an algebraically closed field of characteristic zero, and $\omega, B$ be $\mathbb{Q}$-divisors on $X$ with $\omega$ ample. Then we can define a Bridgeland stability condition $\sigma_{B,\omega}\in \mbox{Stab}^{\Gamma}(X)$, where 
$$
\Gamma=\widetilde{\mathrm{NS}}(X):=\mathbb{Z}\oplus \mathrm{NS}(X)\oplus \frac{1}{2}\mathbb{Z}
$$ 
and $v$ is the Chern character map \cite{B08,AB}. First, let us start by recalling slope stability for sheaves:

 Consider the twisted Mumford slope function  
\[
  \mu_{B,\omega} (E) = \begin{cases} \frac{\omega\ch_1^B(E)}{\omega^2\ch_0(E)} &\text{ if } \ch_0(E) \neq 0, \\
  +\infty &\text{ otherwise,}
  \end{cases}
\]
where $\ch^B(E)=\exp(-B)\ch(E)$. A nonzero coherent sheaf $E$ on $X$ is then called $\mu_{B,\omega}$-semistable (resp.\ $\mu_{B,\omega}$-stable) if
\[
  \mu_{B,\omega}(F) \leq (\text{resp.\ $<$})\,\, \mu_{B,\omega}(E)
\]
for every nonzero proper subsheaf $F\hookrightarrow E$. In particular, a $\mu_{B,\omega}$-semistable sheaf of nonzero rank is torsion-free. In such case, we have $\mu_{B,\omega}(E)=\mu_{0,\omega} (E)-B\omega$, meaning $\mu_{B,\omega}$-semistability is equivalent to $\mu_{0,\omega}$-stability for $E$ (the usual Mumford stability).

An important property of $\mu_{B,\omega}$-semistable sheaves is that they satisfy the Bogomolov inequality 
\begin{equation}\label{Bogomolovineq}
\Delta(E):=\ch_1(E)^2-2\ch_0(E)\ch_2(E)\geq 0.
\end{equation}
Second, we define the following full subcategories of $\Coh (X)$:
\begin{align*}
  \mathcal{T}_{B,\omega} &= \{ E \in \Coh (X)\colon \mu_{B,\omega}(Q)>0\ \text{for all sheaf quotients}\ E\twoheadrightarrow Q \}, \\
   \mathcal{F}_{B,\omega} &= \{ E \in \Coh (X) \colon \mu_{B,\omega}(A)\leq 0\ \text{for all subsheaves}\ A\hookrightarrow E\}.
\end{align*}
These subcategories form a torsion pair and so the extension closure in $D^b(X)$
\[
  \Coh^{B,\omega}(X) = \langle \mathcal{F}_{B,\omega}[1], \mathcal{T}_{B,\omega} \rangle
\]
is the heart of a t-structure. The function
\[
Z_{B,\omega,t}(E) = -\ch_2^B(E) + \frac{t^2\omega^2}{2}\ch_0^B(E) + i \omega\ch_1^B(E)
\]
is the central charge of a stability condition $\sigma_{B,\omega,t }$ on $\Coh^{B,\omega}(X)$ for every $t>0$ (see \cite{AB} and also  \cite{B08} for the positivity and HN properties, and \cite[Theorem 3.23]{Toda2012StabilityCA} for the support property). The corresponding Bridgeland slope is denoted by $\nu_{B,\omega,t}$, i.e.,
$$
\nu_{B,\omega,t}(E)=\begin{cases}
\frac{\ch_2^B(E)-\frac{t^2\omega^2}{2}\ch_0(E)}{\omega\ch_1^B(E)} &\text{if}\ \ \omega\ch_1^B(E)\neq 0\\
+\infty &\text{otherwise.}
\end{cases}
$$

\begin{definition}
Let $B$ be a $\mathbb{Q}$-divisor. A torsion-free sheaf $E$ is called $B$-twisted Gieseker semistable with respect to an ample class $\omega$ if and only if
\begin{itemize}
    \item[(i)] $E$ is $\mu_{B,\omega}$-semistable, and
    \item[(ii)] For every subsheaf $A\hookrightarrow E$ such that $\mu_{B,\omega}(A)=\mu_{B,\omega}(E)$ we have
    $$
    \frac{\chi(A\otimes L )}{\ch_0(A)} \leq \frac{\chi(E\otimes L)}{\ch_0(E)},
    $$
    where $L=-B+\frac{K_X}{2}$ and $\chi(\_\otimes L)$ is computed using the Hirzebruch-Riemann-Roch formula.
\end{itemize}
\end{definition}

\begin{remark}\label{largevolumelimit} As proven by Bridgeland \cite[Prop.\  14.2]{B08} and Lo-Qin \cite[Theorems 1.1, 1.2(i)]{LQ}, the only objects with non-negative rank and positive degree (with respect to some polarization $\omega$ and $\mathbb{R}$-divisor $B$) that remain $\nu_{B,\omega,t}$-semistable for large values of the parameter $t$ are precisely the $B$-twisted Gieseker semistable sheaves with respect to $\omega$. 
\end{remark}

\begin{remark}\label{rem:BGineq}
Given $\mathbb{Q}$-divisors $B, \omega$ on $X$, the numerical discriminant $\Delta_{B,\omega}$ of an object $E \in D^b(E)$ is
\[
  \Delta_{B,\omega}(E) := (\omega \ch_1^B(E))^2 - 2\omega^2\ch_0^B(E)\ch_2^B(E).
\]
When $\omega$ is ample, every $\nu_{B,\omega,t }$-semistable object $E$ satisfies the Bogomolov-Gieseker inequality
\[
  \Delta_{B,\omega}(E) \geq 0
\]
(see \cite[Theorem 6.13]{MS}). 
\end{remark}
The following lemmas are stated here for completeness of the presentation.
\begin{lemma}\label{equalslope}\cite[Lemma 4.6]{LoMar23}
If $E$ is a $B$-twisted $\omega$-semistable sheaf of positive rank and $A\hookrightarrow E$ is a subobject of $E$ in $\Coh^{B,\omega}(X)$ such that $\mu_{B,\omega}(A)=\mu_{B,\omega}(E)$ and $\nu_{B,\omega,t}(A)=\nu_{B,\omega,t}(E)$, then $A$ is a subsheaf of $E$ that makes $E$ a properly $B$-twisted $\omega$-semistable sheaf. In particular, $A$ can never destabilize $E$ for any value of $t$.  
\end{lemma}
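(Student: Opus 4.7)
The plan is to first promote the categorical monomorphism $A\hookrightarrow E$ in $\Coh^{B,\omega}(X)$ to a genuine inclusion of coherent sheaves, and then to translate the equality $\nu_{B,\omega,t}(A)=\nu_{B,\omega,t}(E)$ into an equality of twisted Gieseker slopes via Hirzebruch--Riemann--Roch. The two main ingredients I would rely on are that every sheaf lying in $\Coh^{B,\omega}(X)$ must actually belong to $\mathcal{T}_{B,\omega}$, so taking $E$ itself as a quotient already forces $\mu_{B,\omega}(E)>0$, and the additivity of $\omega\ch_1^B$ along short exact sequences of sheaves combined with the Mumford semistability of $E$.

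First I would form the short exact sequence $0\to A\to E\to Q\to 0$ in $\Coh^{B,\omega}(X)$ and take its long exact sequence of cohomology sheaves, obtaining
\[
0\to H^{-1}(Q)\to A\to E\to H^0(Q)\to 0
\]
with $H^{-1}(Q)\in\mathcal{F}_{B,\omega}$ (in particular torsion-free) and $H^0(Q)\in\mathcal{T}_{B,\omega}$. The main technical step---and the part I expect to be the hardest---is to argue that $H^{-1}(Q)=0$. If $H^{-1}(Q)$ were nonzero, it would have positive rank with $\mu_{B,\omega}(H^{-1}(Q))\leq 0$, while $A':=\im(A\to E)$ would be a genuine subsheaf of $E$ satisfying $\mu_{B,\omega}(A')\leq\mu_{B,\omega}(E)$ by Mumford semistability. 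Plugging these bounds into the additivity relation coming from $0\to H^{-1}(Q)\to A\to A'\to 0$, and using $\mu_{B,\omega}(A)=\mu_{B,\omega}(E)>0$, forces $r_A\leq r_{A'}$, contradicting $r_A=r_{H^{-1}(Q)}+r_{A'}$. Hence $A$ is actually a subsheaf of $E$ with equal $\mu_{B,\omega}$-slope.

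It remains to encode the $\nu_{B,\omega,t}$-slope equality as equality of the reduced Hilbert polynomials governing Gieseker stability. A direct HRR calculation on $X$ using $L=-B+K_X/2$ gives
\[
\chi(F\otimes L)=\ch_2^B(F)+\ch_0(F)\!\left(\chi(\OX)-\tfrac{K_X^2}{8}\right)
\]
for every coherent sheaf $F$, so $\chi(A\otimes L)/r_A=\chi(E\otimes L)/r_E$ is equivalent to $\ch_2^B(A)/r_A=\ch_2^B(E)/r_E$, and the latter follows immediately by combining $\mu_{B,\omega}(A)=\mu_{B,\omega}(E)$ with $\nu_{B,\omega,t}(A)=\nu_{B,\omega,t}(E)$. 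So $A$ saturates the twisted Gieseker inequality, exhibiting $E$ as properly $B$-twisted Gieseker semistable. Finally, the $\mu_{B,\omega}$-equality yields $\ch_0(A)/\omega\ch_1^B(A)=\ch_0(E)/\omega\ch_1^B(E)$, so the $t$-dependent contribution to $\nu_{B,\omega,t}(A)-\nu_{B,\omega,t}(E)$ cancels and the $\nu$-equality at one value of $t$ propagates to all $t>0$; in particular $A$ can never destabilize $E$.
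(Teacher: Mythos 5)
Your proof is correct and follows the expected route (the paper itself only cites \cite[Lemma 4.6]{LoMar23} without reproducing the argument): the long exact sequence of cohomology sheaves plus the slope bookkeeping $\mu_{B,\omega}(A)=\mu_{B,\omega}(E)>0$, $\mu_{B,\omega}(\mathcal{H}^{-1}(Q))\leq 0$, $\mu_{B,\omega}(A')\leq\mu_{B,\omega}(E)$ correctly forces $\mathcal{H}^{-1}(Q)=0$, and your Hirzebruch--Riemann--Roch computation with $L=-B+K_X/2$ correctly identifies $\chi(\,\cdot\,\otimes L)/\ch_0$ with $\ch_2^B/\ch_0$ up to a common additive constant, so the two slope equalities translate exactly into proper twisted Gieseker semistability. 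The final observation that the $t$-dependent term cancels when the Mumford slopes agree, so the $\nu$-equality holds for all $t$, is also the right way to get the ``in particular'' clause.
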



\begin{lemma}\label{estimates a-mini-walls}\cite[Lemma 4.8]{LoMar23}
Let $B,\omega\in \mathrm{NS}(X)_{\mathbb{Q}}$ with $\omega$ ample and integral. Let $E$ be a $B$-twisted $\omega$-Gieseker semistable sheaf with $\ch_0(E)>0$ and $\omega\ch_1^B(E)> 0$.  Then $E$ is $\nu_{B,\omega,t}$-semistable for 
$$
t^2\omega^2> \mu_{B,\omega}(E)\Delta_{B,\omega}(E).
$$
\end{lemma}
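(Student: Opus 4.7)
I argue by contradiction: assume $E$ is not $\nu_{B,\omega,t}$-semistable for some $t$ with $t^2\omega^2>\mu_{B,\omega}(E)\Delta_{B,\omega}(E)$. Write $r=\ch_0(E)$, $d=\omega\ch_1^B(E)$, $c=\ch_2^B(E)$, with analogous notation for subobjects, and set $D:=dr_A-d_Ar$. Since $E$ is $\mu_{B,\omega}$-semistable with $\mu_{B,\omega}(E)>0$, every nonzero quotient of $E$ has positive $\mu_{B,\omega}$-slope, so $E\in\mathcal{T}_{B,\omega}\subset\Coh^{B,\omega}(X)$.

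Let $A\hookrightarrow E$ be a $\nu_{B,\omega,t}$-destabilizing subobject in $\Coh^{B,\omega}(X)$, taken $\nu$-semistable by passing to the HN-maximal destabilizer; thus $\Delta_{B,\omega}(A)\geq 0$ by Remark \ref{rem:BGineq}. The long exact cohomology sequence of $A\to E\to E/A$ combined with $E$ being a pure sheaf forces $H^{-1}(A)=0$, yielding a short exact sequence of sheaves
\begin{equation*}
0\to H^{-1}(E/A)\to A\to A'\to 0
\end{equation*}
with $A':=\operatorname{image}(A\to E)\subset E$ in $\mathcal{T}_{B,\omega}$ and $H^{-1}(E/A)\in\mathcal{F}_{B,\omega}$. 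A standard argument using $\mu_{B,\omega}(H^{-1}(E/A))\leq 0<\mu_{B,\omega}(A')$ lets me replace $A$ by $A'$ and assume $A$ is a subsheaf of $E$. The $\mu$-semistability of $E$ gives $\mu_{B,\omega}(A)\leq\mu_{B,\omega}(E)$, with equality excluded by Lemma \ref{equalslope}; hence $\mu_{B,\omega}(A)<\mu_{B,\omega}(E)$ and $D>0$. Solving $\nu_{B,\omega,t_w}(A)=\nu_{B,\omega,t_w}(E)$ yields the explicit wall
\begin{equation*}
t_w^2\omega^2=\frac{2(dc_A-d_Ac)}{D},
\end{equation*}
and destabilization forces $t\leq t_w$.

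It remains to show $t_w^2\omega^2\leq\mu_{B,\omega}(E)\Delta_{B,\omega}(E)$. Cross-multiplying and substituting $c_A=(d_A^2-\Delta_{B,\omega}(A))/(2\omega^2 r_A)$ and $c=(d^2-\Delta_{B,\omega}(E))/(2\omega^2 r)$ converts this into
\begin{equation*}
r_A\Delta_{B,\omega}(E)(d_A-dD)\leq d\bigl(d_AD+r\Delta_{B,\omega}(A)\bigr).
\end{equation*}
The right-hand side is manifestly $\geq 0$. For the left-hand side, note that $D=\omega(r_A\ch_1(E)-r\ch_1(A))$ is a positive integer (the $B$-twist cancels and $\omega$ is integral); combined with the positivity $d_A>0$ forced by $A\in\mathcal{T}_{B,\omega}$ and the strict inequality $d_A/r_A<d/r$, a direct calculation yields $dD\geq d_A$, so the left-hand side is $\leq 0$. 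This contradicts $t^2\omega^2>\mu_{B,\omega}(E)\Delta_{B,\omega}(E)$, completing the proof.

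\emph{Main obstacle.} The crux is verifying $dD\geq d_A$ uniformly in $B$: it hinges on the integrality of $\omega$ (pinning down $D\in\mathbb{Z}_{\geq 1}$) together with the upper bound $\omega B<\omega\ch_1(A)/r_A$ coming from $A\in\mathcal{T}_{B,\omega}$, via a short case analysis separating $r_A=r$ (where $E/A$ is $1$-dimensional torsion, giving $d-d_A\in\mathbb{Z}_{\geq 1}$) from $r_A<r$ (where $rD>r_A$ is automatic).
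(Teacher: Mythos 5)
Your overall strategy is the same as the paper's: take the (HN-maximal, hence $\nu$-semistable) destabilizing subobject $A$, use $\Delta_{B,\omega}(A)\geq 0$ together with $\Delta_{B,\omega}(E)\geq 0$ and the integrality of $D=\omega(\ch_1(E)\ch_0(A)-\ch_1(A)\ch_0(E))\geq 1$ to push the wall below $\mu_{B,\omega}(E)\Delta_{B,\omega}(E)$. Your final inequality $r_A\Delta_{B,\omega}(E)(d_A-dD)\leq d\bigl(d_AD+r\Delta_{B,\omega}(A)\bigr)$ is an algebraically equivalent repackaging of the paper's chain of estimates, and the sign analysis ($D\geq 1$ and $d_A\leq d$, the latter because the quotient $E/A$ lies in $\Coh^{B,\omega}(X)$ and so has nonnegative $\omega\ch_1^B$) is correct.

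There is, however, a genuine gap at the step ``a standard argument \dots lets me replace $A$ by $A'$ and assume $A$ is a subsheaf of $E$.'' Two things break if you literally pass to $A'=\operatorname{image}(A\to E)$. First, $A'$ need not destabilize $E$: one has $Z(A')=Z(A)-Z(F)$ with $F=H^{-1}(E/A)\in\mathcal{F}_{B,\omega}$, and since $\ch_2^B(F)$ is unbounded above relative to the other invariants (think $F=\mathcal{O}(-nH)$), the slope $\nu_{B,\omega,t}(A')$ can drop below $\nu_{B,\omega,t}(E)$ even when $\nu_{B,\omega,t}(A)>\nu_{B,\omega,t}(E)$; so the wall equation you solve for $A'$ carries no information about $t$. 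Second, and more importantly for your computation, $A'$ is merely the sheaf image of a map, so it is neither $\nu$-semistable nor $\mu_{B,\omega}$-semistable in general, and the inequality $\Delta_{B,\omega}(A')\geq 0$ — which is exactly the term you need on the right-hand side — is not available (arbitrary torsion-free sheaves violate Bogomolov, e.g.\ $\mathcal{O}\oplus\mathcal{O}(H)$). The repair is simply not to replace: the original $A$ is automatically a sheaf of positive rank (since $H^{-1}(A)\subset H^{-1}(E)=0$ and $A\supset F$ with $A/F\subset E$ torsion-free), it satisfies $\Delta_{B,\omega}(A)\geq 0$ by $\nu$-semistability, $d_A\leq d$ because $E/A\in\Coh^{B,\omega}(X)$, $D\in\mathbb{Z}$ because the $B$-twist cancels and $\ch_1(A)$ is integral, and $\mu_{B,\omega}(A)\leq\mu_{B,\omega}(E)$ is obtained as in the paper by applying $\mu_{B,\omega}$-semistability of $E$ to the first Harder--Narasimhan factor of $A$ (whose image in $E$ would otherwise violate semistability, forcing it into $F\in\mathcal{F}_{B,\omega}$), not by pretending $A\subset E$ as sheaves. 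With that substitution your argument closes and coincides with the paper's.
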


\begin{lemma}\label{duality}\cite[Lemma 4.2]{martinez2017duality}
    Suppose that $E\in \Coh^{B,\omega}(X)$ is such that $\omega\ch^B_1(E)>0$, then $E$ is $\nu_{B,\omega,t}$-(semi)stable if and only if $E^{\vee}[1]$ is $\nu_{-B,\omega,t}$-(semi)stable. 
\end{lemma}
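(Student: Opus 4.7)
The plan is to show that the functor $\mathbb{D}(-) := R\mathcal{H}om(-,\mathcal{O}_X)[1]$ restricts to an anti-equivalence between the subcategories of $\Coh^{B,\omega}(X)$ and $\Coh^{-B,\omega}(X)$ consisting of objects with strictly positive $\omega\ch_1^B$ (respectively $\omega\ch_1^{-B}$), and that it intertwines the central charges via $Z_{-B,\omega,t}(\mathbb{D} E) = -\overline{Z_{B,\omega,t}(E)}$. Since anti-equivalences exchange subobjects with quotients, the duality of (semi)stability will then follow formally from the induced slope identity.

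I would begin with the Chern character bookkeeping. On a smooth surface $R\mathcal{H}om(-,\mathcal{O}_X)$ negates $\ch_1$ while fixing $\ch_0$ and $\ch_2$, and the shift by $1$ multiplies every $\ch_i$ by $-1$. Combining with $\ch^B = \exp(-B)\ch$ yields
\[
\ch_0^{-B}(\mathbb{D} E) = -\ch_0^B(E), \qquad \ch_1^{-B}(\mathbb{D} E) = \ch_1^B(E), \qquad \ch_2^{-B}(\mathbb{D} E) = -\ch_2^B(E),
\]
and substituting these into the definition of the central charge produces $Z_{-B,\omega,t}(\mathbb{D} E) = -\overline{Z_{B,\omega,t}(E)}$. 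In particular $\omega\ch_1^{-B}(\mathbb{D} E) = \omega\ch_1^B(E) > 0$ and $\nu_{-B,\omega,t}(\mathbb{D} E) = -\nu_{B,\omega,t}(E)$.

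The heart of the proof is showing $\mathbb{D} E \in \Coh^{-B,\omega}(X)$. Starting from the triangle $\mathcal{H}^{-1}(E)[1] \to E \to \mathcal{H}^0(E)$ with $\mathcal{H}^{-1}(E)\in\mathcal{F}_{B,\omega}$ (necessarily torsion-free) and $\mathcal{H}^0(E)\in\mathcal{T}_{B,\omega}$, apply $\mathbb{D}$ and compute the cohomology sheaves using the local-Ext spectral sequence
\[
E_2^{p,q} = \mathcal{E}xt^p(\mathcal{H}^{-q}(E),\mathcal{O}_X) \Rightarrow \mathcal{H}^{p+q-1}(\mathbb{D} E).
\]
On a smooth surface $\mathcal{E}xt^2(G,\mathcal{O}_X) = 0$ for $G$ torsion-free and $\mathcal{H}om(T,\mathcal{O}_X) = 0$ for $T$ torsion, so the spectral sequence collapses onto cohomological degrees $-1$ and $0$. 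Dualizing torsion-free Mumford-semistable sheaves negates their $\mu_{0,\omega}$-slope, so the defining slope conditions for $\mathcal{T}_{B,\omega}$ and $\mathcal{F}_{B,\omega}$ (expressed via $\mu_{0,\omega} = \mu_{B,\omega} + B\omega$) translate under $F \mapsto F^{\vee}$ into the conditions defining $\mathcal{F}_{-B,\omega}$ and $\mathcal{T}_{-B,\omega}$. This gives $\mathcal{H}^{-1}(\mathbb{D} E)\in\mathcal{F}_{-B,\omega}$ and $\mathcal{H}^0(\mathbb{D} E)\in\mathcal{T}_{-B,\omega}$, i.e. $\mathbb{D} E\in\Coh^{-B,\omega}(X)$.

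Granted the first two steps, the equivalence of stability is formal. A destabilizing sequence $0 \to A \to E \to Q \to 0$ in $\Coh^{B,\omega}(X)$ (which by Lemma \ref{equalslope}-type reductions we may assume has both $A$ and $Q$ of positive $\omega\ch_1^B$-degree) dualizes to $0 \to \mathbb{D} Q \to \mathbb{D} E \to \mathbb{D} A \to 0$ in $\Coh^{-B,\omega}(X)$; combined with $\nu_{-B,\omega,t}(\mathbb{D} -) = -\nu_{B,\omega,t}(-)$, the inequality $\nu_{B,\omega,t}(A) > \nu_{B,\omega,t}(E)$ becomes $\nu_{-B,\omega,t}(\mathbb{D} Q) > \nu_{-B,\omega,t}(\mathbb{D} E)$, destabilizing $\mathbb{D} E$. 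The converse direction is symmetric, since $\mathbb{D}^2$ is naturally isomorphic to the identity on the subcategory in play (the obstructing local $\mathcal{E}xt$ sheaves vanish for the same surface-dimension reasons). The principal obstacle is the middle step, where the local-Ext spectral sequence must be unpacked carefully and the resulting cohomology sheaves matched with the $-B$-torsion pair; the Chern character calculation and the formal stability argument are routine once that identification is in hand.
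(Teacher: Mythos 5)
The Chern-character bookkeeping and the slope identity $\nu_{-B,\omega,t}(E^{\vee}[1])=-\nu_{B,\omega,t}(E)$ in your first step are correct, but the decisive middle step --- that $\mathbb{D}(-)=\RHom(-,\calo_X)[1]$ restricts to an anti-equivalence between the subcategories of $\Coh^{B,\omega}(X)$ and $\Coh^{-B,\omega}(X)$ cut out by positivity of the tilt-degree --- is false, and this is exactly where the content of the lemma lies. For a general $E\in\Coh^{B,\omega}(X)$ with $\omega\ch_1^B(E)>0$, the complex $E^{\vee}[1]$ has cohomology in degrees $-1$, $0$ \emph{and} $1$: in the spectral sequence you write down, both $\inext^1(\calh^{-1}(E),\calo_X)$ and $\inext^2(\calh^0(E),\calo_X)$ land in degree $1$, and neither vanishes in general (the first is the zero-dimensional sheaf measuring failure of local freeness of the torsion-free sheaf $\calh^{-1}(E)$, the second is dual to the zero-dimensional torsion of $\calh^0(E)$). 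Concretely, if $F=\cali_p\otimes L$ lies in $\calf_{B,\omega}$ with $\omega\ch_1^B(F)<0$, then $E=F[1]$ satisfies your hypotheses but $E^{\vee}[1]=\RHom(F,\calo_X)$ has $\calh^1\cong\inext^1(F,\calo_X)\cong\calo_p\neq 0$, so it does not lie in $\Coh^{-B,\omega}(X)$. Such an $E$ is of course $\nu_{B,\omega,t}$-unstable (the zero-dimensional sheaf $F^{**}/F$ embeds into it in the heart with slope $+\infty$), which is why the lemma survives --- but it shows that semistability must be invoked \emph{before} the categorical bookkeeping, not after. Relatedly, the spectral sequence does not degenerate at $E_2$: the differential $d_2\colon\inhom(\calh^{-1}(E),\calo_X)\to\inext^2(\calh^0(E),\calo_X)$ is essential, e.g.\ to see that $\mathbb{D}^2(\cali_p)\cong\cali_p$ has no $\calh^1$ even though $\calh^0(\mathbb{D}(\cali_p))\cong\calo_p$ is zero-dimensional.

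Because the positive-degree subcategories are not preserved by $\mathbb{D}$, the ``formal'' last step also does not run as written: dualizing a destabilizing sequence $0\to A\to E\to Q\to 0$ requires $A^{\vee}[1]$ and $Q^{\vee}[1]$ to lie in $\Coh^{-B,\omega}(X)$, which by the above is only available once they are known to be (close to) semistable --- a circularity. The repair, which is where the cited proof in \cite[Lemma 4.2]{martinez2017duality} does its work, is to use the semistability of $E$ together with $\omega\ch_1^B(E)>0$ to show that $\calh^{-1}(E)$ is reflexive (hence locally free on a surface) and that the zero-dimensional contributions in degree $1$ are killed by the $d_2$ differential, so that $E^{\vee}[1]\in\Coh^{-B,\omega}(X)$; and then to argue on Harder--Narasimhan factors, dualizing only the extremal semistable factor of a putative destabilization rather than an arbitrary short exact sequence. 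Your proposal as it stands has a genuine gap at the point you yourself identify as the principal obstacle.
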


\section{Kodaira-type vanishing for surfaces over arbitrary algebraically closed fields}

Let us start by recalling a result by Koseki (also known as the Bogomolov-Gieseker-Koseki inequality):

\begin{theorem}\label{KosekiConstant}\cite[Theorem 3.5]{Kos22} Let $X$ be a smooth projective surface over an algebraically closed field of positive characteristic. Then there exists a constant $C_{[X]}\geq 0$ (depending only on the birational class of $X$) such that for every numerically nontrivial nef divisor $H$ and every $\mu_H$-semistable torsion-free sheaf $E\in \Coh(X)$, we have
$$
\tilde{\Delta}(E):=\ch_1(E)^2-2\ch_2(E)\ch_0(E)+C_{[X]}\ch_0(E)^2\geq 0.
$$
Moreover, the constant $C_{[X]}$ can be taken as follows:
\begin{enumerate}
    \item If $X$ is a minimal surface of general type, then 
    $$
    C_{[X]}=2+5K_X^2-\chi(\mathcal{O}_X).
    $$
    \item If $\kappa(X)=1$ and $X$ is quasi-elliptic, then 
    $$
    C_{[X]}=2-\chi(\mathcal{O}_X).
    $$
    \item Otherwise, $C_{[X]}=0$.
\end{enumerate}
\end{theorem}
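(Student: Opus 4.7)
The plan is to prove this Bogomolov-type inequality by combining Langer's theory of Frobenius semistability in positive characteristic with the Enriques-Kodaira classification of surfaces. Since $C_{[X]}$ is supposed to depend only on the birational class of $X$, the first move would be to reduce to the case where $X$ is minimal. For a blowup $\pi\colon \tilde X\to X$ at a point, a $\mu_{\tilde H}$-semistable torsion-free sheaf $\tilde E$ on $\tilde X$ (with respect to a suitably perturbed nef class $\tilde H$) can be compared with $\pi_*\tilde E$ on $X$, and the Chern characters transform in a controlled way. This should show that the inequality on the minimal model with constant $C$ implies the inequality on $\tilde X$ with the same constant, reducing the problem to minimal surfaces.

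The main technical engine is then Langer's theorem on the instability of Frobenius pullbacks. For a $\mu_H$-semistable torsion-free sheaf $E$, I would consider the iterated Frobenius pullbacks $(F^n)^*E$; their Harder-Narasimhan filtrations satisfy Langer's bound that the maximal-slope deviation from $p^n\mu_H(E)$ is sublinear in $p^n$. Passing to the limit in a suitable sense produces a ``strongly semistable'' replacement whose discriminant satisfies the classical Bogomolov inequality $\Delta\geq 0$ by the characteristic-zero-style argument (using $\inhom(E,E)$, Serre duality, and Riemann-Roch on $X$). The discrepancy between $\Delta(E)$ and the discriminant of this strongly semistable replacement is then controlled by a term depending only on how badly Frobenius can destabilize sheaves on $X$.

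The decisive step, and the main obstacle, is to bound this discrepancy explicitly in terms of the birational invariants $K_X^2$ and $\chi(\calo_X)$, and to identify precisely when it vanishes. Here one must invoke the classification of minimal surfaces in positive characteristic: for surfaces of Kodaira dimension $\kappa\leq 0$ and for smooth (non-quasi-elliptic) elliptic surfaces with $\kappa=1$, structural results rule out Frobenius-destabilizing subsheaves in the relevant ranges, forcing the discrepancy to vanish and giving $C_{[X]}=0$. For quasi-elliptic surfaces of $\kappa=1$ and for surfaces of general type, the tangent sheaf (or appropriate Frobenius-pulled-back variants) can genuinely be destabilized, and one must quantify the destabilization using the Bombieri-Mumford analysis of quasi-elliptic fibrations and Shepherd-Barron-type estimates for surfaces of general type. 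Matching these destabilization bounds to the explicit constants $C_{[X]}=2+5K_X^2-\chi(\calo_X)$ and $C_{[X]}=2-\chi(\calo_X)$ — rather than loose estimates involving, say, $K_X^2$ with an unspecified coefficient — is the genuinely hard part of the argument and is the core technical content of Koseki's proof.
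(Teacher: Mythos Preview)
The paper does not prove this statement at all: Theorem~\ref{KosekiConstant} is quoted verbatim from Koseki's paper \cite[Theorem 3.5]{Kos22} and used as a black box throughout. There is no proof in the paper to compare your proposal against; the author simply invokes the result to build stability conditions in positive characteristic and to derive the modified discriminant inequality of Lemma~\ref{numcharpDiscriminant} and the wall bound of Lemma~\ref{estimates mini-walls char p}.

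Your sketch is a plausible outline of how Koseki's own argument proceeds (reduction to minimal models, Langer's Frobenius-instability bounds, and a case analysis along the Enriques--Kodaira classification), but evaluating its correctness would require checking it against \cite{Kos22}, not against the present paper. For the purposes of this note, no proof of Theorem~\ref{KosekiConstant} is expected; the correct ``proof'' here is simply the citation.
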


\begin{corollary}
    Let $E\in \Coh(X)$ be a torsion-free $\mu_H$-semistable sheaf such that $\ch_1^{B}(E)\cdot H=0$, then
    $$
    \ch_2^B(E)-\left(\frac{C_{[X]}}{2H^2}+\frac{\alpha^2}{2}\right)\ch_0(E)H^2<0
    $$
    for all $\alpha>0$. As a consequence, 
    $$
    Z_{B,H,\alpha}^{C_{[X]}}:=-\ch_2^B+\left(\frac{C_{[X]}}{2H^2}+\frac{\alpha^2}{2}\right)\ch_0H^2+\sqrt{-1}\ch_1^B\cdot H
    $$
    is the central charge of a stability condition on $\Coh^{B,H}(X)$, which satisfies the support property with respect to the quadratic form 
    $$
    Q=\tilde{\Delta}+C_H(\ch_1^B\cdot H)^2,
    $$
    where $C_H\geq 0$ is a constant (only depending on $H$) such that
    $$
    C_H(HD)^2+D^2\geq 0
    $$
    for all effective divisors $D$ on $X$. We denote the corresponding central charge by $\nu^{C_{[X]}}_{B,H,\alpha}$.
\end{corollary}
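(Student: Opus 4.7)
The statement has two parts: the numerical inequality for $\mu_H$-semistable sheaves, and the construction of the stability condition. My plan is to dispatch the first by Koseki plus Hodge index, and then feed the inequality into the verification of the stability axioms.

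For the inequality, a direct expansion using $\ch^B = e^{-B}\ch$ shows the numerical identity
\[
(\ch_1^B(E))^2 - 2\ch_0(E)\ch_2^B(E) = \ch_1(E)^2 - 2\ch_0(E)\ch_2(E),
\]
so Theorem \ref{KosekiConstant} can be rewritten as $(\ch_1^B(E))^2 - 2\ch_0(E)\ch_2^B(E) + C_{[X]}\ch_0(E)^2 \geq 0$. Since $\ch_1^B(E)\cdot H = 0$ and $H$ is ample, the Hodge index theorem forces $(\ch_1^B(E))^2 \leq 0$. Substituting and dividing by the positive integer $\ch_0(E)$ (note $E$ is torsion-free) yields $\ch_2^B(E) \leq \tfrac{C_{[X]}}{2H^2}\ch_0(E) H^2$, and subtracting $\tfrac{\alpha^2}{2}\ch_0(E) H^2$ with $\alpha>0$ gives the strict inequality asserted in the statement.

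For the stability condition I have to verify positivity, the HN property, and the support property. Positivity on $\Coh^{B,H}(X)$ is standard: the imaginary part $\mathfrak{Im}\, Z^{C_{[X]}}_{B,H,\alpha}(E) = \ch_1^B(E)\cdot H$ is nonnegative. When it vanishes, $E$ decomposes (via its Mumford HN filtration inside the tilt) into $0$-dimensional torsion sheaves, for which $-\mathfrak{Re}\, Z = \mathrm{length} > 0$, and shifts $F[1]$ of torsion-free $\mu_H$-semistable sheaves $F \in \mathcal{F}_{B,H}$ with $\ch_1^B(F)\cdot H = 0$; for these the inequality just proved, applied to $F$, gives $-\mathfrak{Re}\, Z(F[1]) > 0$. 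Additivity of $Z$ then yields positivity of the real part on every nonzero object of phase $\pi$. The HN property follows from noetherianity of $\Coh^{B,H}(X)$ and local finiteness of $Z$, as in \cite{B07}.

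The support property is the main technical step. On $\ker Z$, a direct computation collapses $Q(v(E))$ to $(\ch_1^B(E))^2 - \alpha^2 H^2 \ch_0(E)^2$, since the $C_{[X]}\ch_0^2$ contribution cancels against the $\tfrac{C_{[X]}}{2H^2}\ch_0 H^2$ shift in $\ch_2^B$; Hodge index together with $\ch_1^B(E)\cdot H = 0$ then gives $Q(v(E)) < 0$ strictly, with the $\alpha^2$-term handling the positive-rank case and ampleness of $H$ handling the rank-zero case. For $\nu^{C_{[X]}}_{B,H,\alpha}$-semistable objects my plan is to follow the strategy of \cite{B08, Toda2012StabilityCA, MS}, substituting Koseki's inequality $\tilde{\Delta}\geq 0$ for the classical Bogomolov inequality on Mumford-semistable torsion-free factors. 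The role of the correction $C_H(\ch_1^B\cdot H)^2$, via the defining property $C_H(HD)^2 + D^2 \geq 0$ for effective $D$, is precisely to trade Hodge-index control of $\ch_1^2$ for control of $(\ch_1^B\cdot H)^2$ when passing from the Mumford-HN factors to the whole tilt-semistable object. This last propagation step is the chief obstacle; it mirrors the characteristic-zero wall-crossing argument, now run with $\tilde{\Delta}$ replacing $\Delta$.
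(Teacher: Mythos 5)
Your argument is correct and follows the same route the paper intends: the twist-invariance of the discriminant plus the Hodge index theorem reduce the strict inequality to Koseki's bound $\tilde{\Delta}(E)\geq 0$, and the stability condition is then assembled by the standard tilt construction of \cite{AB,B08,Toda2012StabilityCA,MS}, with $\tilde{\Delta}$ replacing $\Delta$ and the computation on $\ker Z$ collapsing $Q$ to $(\ch_1^B)^2-\alpha^2H^2\ch_0^2<0$ exactly as you indicate. The paper leaves these verifications to the cited references (cf.\ the proof of Lemma \ref{numcharpDiscriminant}), so your level of detail matches or exceeds its own.
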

\begin{remark}
    Notice that in the case when $C_{[X]}=0$ we have $\widetilde{\Delta}=\Delta$,  and $Z^{C_{[X]}}_{B,H,\alpha}=Z_{B,H,\alpha}$. Moreover, the quadratic form becomes $Q=\Delta+C_{H}(\ch_1^B\cdot H)^2$. Therefore, for numerical computations we could assume that the characteristic zero case is included in the case when $C_{[X]}=0$.
\end{remark}
\begin{remark}\label{reparametrization}
    Notice that under the re-parametrization 
    $$
    t^2=\frac{C_{[X]}}{H^2}+\alpha^2,
    $$
    the stability condition $(Z_{B,H,\alpha}^{C_{[X]}},\Coh^{B,H}(X))$ coincides with $(Z_{B,H,t},\Coh^{B,H}(X))$ (even though they satisfy the support property with respect to different quadratic forms). In particular, every $B$-twisted $H$-Gieseker semistable sheaf $E$ (similarly $E[1]$) is $\nu^{C_{[X]}}_{B,H,\alpha}$-semistable for $\alpha\gg 0$, whenever $E\in \Coh^{B,H}(X)$ (respectively, $E[1]\in \Coh^{B,H}(X)$). However, the bound of Lemma \ref{estimates a-mini-walls} does not work because precisely the support property with respect to $\Delta_{B,H}$ fails in positive characteristic. 
\end{remark}
\begin{lemma}\label{numcharpDiscriminant}
Let $X$ be a smooth projective surface over an algebraically closed field of positive characteristic and let $C_{[X]}$ as defined in Theorem \ref{KosekiConstant}. Then, any $\nu_{B,H,\alpha}^{C_{[X]}}$-semistable object $E\in\Coh^{B,H}(X)$ satisfies
\begin{equation}\label{charpDiscriminant}
\widetilde{\Delta}_{B,H}(E):=(H\ch_1^B(E))^2-2H^2\ch_0^B(E)\ch_2^B(E)+C_{[X]}\ch_0(E)^2\geq 0.
\end{equation}
\end{lemma}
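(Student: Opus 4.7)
My plan is to run the standard wall-crossing argument used to derive Bogomolov--Gieseker-type inequalities for tilt stability, substituting Koseki's inequality (Theorem \ref{KosekiConstant}) in place of the characteristic-zero Bogomolov inequality.

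First, using the reparametrization $t^2 = C_{[X]}/H^2 + \alpha^2$ of Remark \ref{reparametrization}, the condition $\sigma^{C_{[X]}}_{B,H,\alpha}$ coincides on its heart with the ordinary tilt stability $\sigma_{B,H,t}$. Hence Remark \ref{largevolumelimit} applies: for $\alpha \gg 0$, any $\nu^{C_{[X]}}_{B,H,\alpha}$-semistable object $E$ with $\ch_0(E) > 0$ and $H\ch_1^B(E) > 0$ is, up to shift, a $B$-twisted $H$-Gieseker semistable sheaf; in particular it is $\mu_H$-semistable and torsion-free. Koseki's inequality then gives $\tilde{\Delta}(E) \geq 0$, and combining this with the Hodge index theorem $(H\ch_1^B(E))^2 \geq H^2(\ch_1^B(E))^2$ establishes the desired $\widetilde{\Delta}_{B,H}(E) \geq 0$ for such $E$. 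The case $\ch_0(E) = 0$ is automatic since then $\widetilde{\Delta}_{B,H}(E) = (H\ch_1^B(E))^2 \geq 0$.

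To pass to arbitrary $\alpha > 0$, I would proceed by descending induction. If $E$ is $\nu^{C_{[X]}}_{B,H,\alpha_0}$-semistable and $\alpha_0$ lies outside the large-$\alpha$ regime of Step~1, I would increase $\alpha$ until $E$ destabilizes on some numerical wall $W$ with Jordan--H\"older factors $E_1, \ldots, E_n$; each $E_i$ is $\nu^{C_{[X]}}_{B,H,\alpha}$-semistable for $\alpha$ slightly above $W$, and the central charges $Z^{C_{[X]}}_{B,H,\alpha}(E_i)$ are aligned with $Z^{C_{[X]}}_{B,H,\alpha}(E)$ on $W$. The inductive hypothesis gives $\widetilde{\Delta}_{B,H}(E_i) \geq 0$, and the usual convexity argument for BG-type quadratic forms, exploiting the alignment of the central charges on $W$, then yields $\widetilde{\Delta}_{B,H}(E) \geq 0$.

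The main obstacle is the convexity step, which relies on showing that $\widetilde{\Delta}_{B,H}$ is negative semi-definite on $\ker Z^{C_{[X]}}_{B,H,\alpha}$---the condition that guarantees preservation of Bogomolov--Gieseker-type inequalities under wall-crossing. In characteristic zero this is a clean application of Hodge index to classes with $H\ch_1^B = 0$; in positive characteristic, the extra $C_{[X]}\ch_0^2$ correction forces one to combine Hodge index with the support property $Q = \tilde{\Delta} + C_H(H\ch_1^B)^2$ from the Corollary above. A secondary delicate point is reconciling the exact coefficient of $C_{[X]}\ch_0^2$: naively multiplying Koseki's inequality by $H^2$ and then applying Hodge index produces a factor of $H^2 C_{[X]}\ch_0^2$, which must be traded against other non-negative terms in the support property to recover the form stated in the lemma.
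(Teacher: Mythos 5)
Your strategy coincides with the paper's: the proof there is precisely the wall-crossing scheme of Macr\`i--Schmidt (\cite[Theorem 6.13]{MSlec}) with the three inputs you identify, namely the base case for $\mu_H$-semistable sheaves via Theorem \ref{KosekiConstant} and Hodge index, negative definiteness of the quadratic form on $\ker Z^{C_{[X]}}_{B,H,\alpha}$, and the large-volume identification of Remarks \ref{largevolumelimit} and \ref{reparametrization}. The induction over walls and the treatment of the degenerate classes ($\ch_0(E)=0$, shifts $F[1]$, objects with $H\ch_1^B=0$) are routine, and your kernel computation goes through: on $\ker Z^{C_{[X]}}_{B,H,\alpha}$ one has $H\ch_1^B=0$ and $2\ch_2^B=(C_{[X]}+\alpha^2H^2)\ch_0$, whence $\widetilde{\Delta}_{B,H}=\bigl(C_{[X]}(1-H^2)-\alpha^2(H^2)^2\bigr)\ch_0^2\leq -\alpha^2(H^2)^2\ch_0^2$, because $H$ integral and ample forces $H^2\geq 1$.

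The genuine problem is the one you dismiss as ``secondary'': the base case does not come out with the stated coefficient, and it cannot be repaired by trading against terms of the support-property form $Q=\tilde{\Delta}+C_H(\ch_1^B\cdot H)^2$, which is a different quadratic form whose positivity says nothing about $\widetilde{\Delta}_{B,H}$. Concretely, for a $\mu_H$-semistable torsion-free sheaf $E$, Hodge index and Theorem \ref{KosekiConstant} give
\[
(H\ch_1^B(E))^2-2H^2\ch_0(E)\ch_2^B(E)\geq H^2\bigl((\ch_1^B(E))^2-2\ch_0(E)\ch_2^B(E)\bigr)=H^2\Delta(E)\geq -H^2C_{[X]}\ch_0(E)^2,
\]
i.e.\ $\widetilde{\Delta}_{B,H}(E)\geq (1-H^2)\,C_{[X]}\ch_0(E)^2$, which is non-negative only when $H^2=1$ or $C_{[X]}=0$. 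What this argument actually proves is the inequality with correction term $C_{[X]}H^2\ch_0(E)^2$ in place of $C_{[X]}\ch_0(E)^2$: with that normalization the base case is exactly $H^2\tilde{\Delta}(E)\geq 0$, the value on the kernel is still negative (it becomes $-\alpha^2(H^2)^2\ch_0^2$), and the induction closes. So either you renormalize $\widetilde{\Delta}_{B,H}$ (which then replaces $\tfrac{C_{[X]}}{H^2}$ by $C_{[X]}$ in the bound used inside the proof of Lemma \ref{estimates mini-walls char p} and propagates to the constants in Theorem \ref{mainthm}), or you must supply a separate argument for slope-semistable sheaves when $H^2\geq 2$ and $C_{[X]}>0$. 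As written, neither your proposal nor the paper's own one-line proof establishes the displayed inequality with the coefficient $C_{[X]}\ch_0(E)^2$.
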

\begin{proof}
This is exactly as the proof of \cite[Theorem 6.13]{MSlec} since by the Hodge Index Theorem and Theorem \ref{KosekiConstant} we have that $\widetilde{\Delta}_{B,H}(E)\geq 0$ for all $\mu_H$-semistable sheaves, $\widetilde{\Delta}_{B,H}$ is negative definite on the kernel of $Z_{B,H,\alpha}^{C_{[X]}}$ and, as mentioned in Remark \ref{reparametrization}, $\nu_{B,H,\alpha}^{C_{[X]}}$-stability for large values of $\alpha$ coincides with $\nu_{B,H,t}$-stability. 
\end{proof}

\begin{lemma}\label{estimates mini-walls char p}
Let $B,H\in \mathrm{NS}(X)_{\mathbb{Q}}$ with $H$ ample and integral. Let $E$ be a $B$-twisted $H$-Gieseker semistable sheaf with $\ch_0(E)>0$ and $H\ch_1^B(E)> 0$.  Then $E$ is $\nu_{B,\omega,\alpha}^{C_{[X]}}$-semistable for 
$$
\alpha^2H^2> \mu_{B,H}(E)\widetilde{\Delta}_{B,H}(E).
$$
\end{lemma}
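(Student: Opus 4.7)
The plan is to mimic the characteristic-zero argument for Lemma \ref{estimates a-mini-walls} (commented out in the source), with the Bogomolov-Gieseker inequality replaced by its Koseki-corrected form from Lemma \ref{numcharpDiscriminant}. By Remark \ref{reparametrization}, $E$ is $\nu^{C_{[X]}}_{B,H,\alpha}$-semistable for all sufficiently large $\alpha$, so it suffices to rule out any destabilizing sequence in the range $\alpha^2 H^2 > \mu_{B,H}(E)\widetilde{\Delta}_{B,H}(E)$. Suppose for contradiction that $E$ fits into a short exact sequence $0\to A\to E\to Q\to 0$ in $\Coh^{B,H}(X)$ with $\nu^{C_{[X]}}_{B,H,\alpha}(A)\geq \nu^{C_{[X]}}_{B,H,\alpha}(E)$. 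After replacing $A$ by its first Harder-Narasimhan factor with respect to $\nu^{C_{[X]}}_{B,H,\alpha}$, I may assume $A$ is itself $\nu^{C_{[X]}}_{B,H,\alpha}$-semistable, so that $\widetilde{\Delta}_{B,H}(A)\geq 0$ by Lemma \ref{numcharpDiscriminant}.

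The usual long-exact-cohomology bookkeeping (using torsion-freeness of $E$) shows that $A$ is a torsion-free sheaf of positive rank, and the $\mu_{B,H}$-semistability of $E$ forces $\mu_{B,H}(A)\leq \mu_{B,H}(E)$; Lemma \ref{equalslope} rules out equality, so I may further assume $\mu_{B,H}(A)<\mu_{B,H}(E)$. The wall equation $\nu^{C_{[X]}}_{B,H,\alpha}(A)=\nu^{C_{[X]}}_{B,H,\alpha}(E)$ then rewrites as
$$\left(\alpha^2+\tfrac{C_{[X]}}{H^2}\right)H^2\bigl(\mu_{B,H}(E)-\mu_{B,H}(A)\bigr)=\tfrac{2\ch_2^B(A)}{\ch_0(A)}\mu_{B,H}(E)-\tfrac{2\ch_2^B(E)}{\ch_0(E)}\mu_{B,H}(A).$$
Using $\tfrac{2\ch_2^B(A)}{\ch_0(A)}\leq H^2\mu_{B,H}(A)^2+\tfrac{C_{[X]}}{H^2}$ from $\widetilde{\Delta}_{B,H}(A)\geq 0$ and the algebraic identity $\tfrac{2\ch_2^B(E)}{\ch_0(E)}=H^2\mu_{B,H}(E)^2+\tfrac{C_{[X]}}{H^2}-\tfrac{\widetilde{\Delta}_{B,H}(E)}{H^2\ch_0(E)^2}$, I would reorganize and discard the non-negative quantities $H^2\mu_{B,H}(A)\mu_{B,H}(E)(\mu_{B,H}(E)-\mu_{B,H}(A))$ and $C_{[X]}(\mu_{B,H}(E)-\mu_{B,H}(A))(1-1/H^2)$ (the latter non-negative because $H$ is integral ample, hence $H^2\geq 1$) to arrive at
$$\alpha^2 H^2\bigl(\mu_{B,H}(E)-\mu_{B,H}(A)\bigr)\leq \frac{\mu_{B,H}(A)\,\widetilde{\Delta}_{B,H}(E)}{H^2\ch_0(E)^2}.$$

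To close the argument, I would combine the standard slope-gap estimate $\mu_{B,H}(E)-\mu_{B,H}(A)\geq 1/(H^2\ch_0(E)\ch_0(A))$ (the $B$-twists cancel in the cross-term, reducing it to integrality of $H\ch_1$ and $\ch_0$) with the bound $\mu_{B,H}(A)\ch_0(A)\leq \mu_{B,H}(E)\ch_0(E)$, which follows from $H\ch_1^B(Q)\geq 0$ since $Q\in\Coh^{B,H}(X)$. Together they yield $\alpha^2 H^2\leq \mu_{B,H}(E)\widetilde{\Delta}_{B,H}(E)$, contradicting the hypothesis. The main obstacle is the careful bookkeeping: the $C_{[X]}$-corrections appear in three places (the LHS coefficient of the wall equation, the $\widetilde{\Delta}(A)$ bound, and the expansion of $2\ch_2^B(E)/\ch_0(E)$), and one must verify that they telescope into a non-negative leftover while leaving exactly $\widetilde{\Delta}_{B,H}(E)$ (rather than $\Delta_{B,H}(E)$) on the right. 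This reorganization depends essentially on $H^2\geq 1$, so the integrality hypothesis on $H$ enters in a genuine way.
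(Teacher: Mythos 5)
Your proposal is correct and follows essentially the same route as the paper's proof: the wall equation, the Koseki-corrected discriminant bound on the destabilizer $A$, the telescoping of the $C_{[X]}$-terms using $H^2\geq 1$, and the final combination of the integral slope-gap estimate with $H\ch_1^B(Q)\geq 0$ all match the paper's argument step for step. The only cosmetic difference is that the paper keeps strict inequalities (via $H\ch_1^B(Q)>0$ and $\mu_{B,H}(A)<\mu_{B,H}(E)$) to conclude $\alpha^2H^2<\mu_{B,H}(E)\widetilde{\Delta}_{B,H}(E)$ directly, whereas you settle for $\leq$ and contradict the strict hypothesis, which is equally valid.
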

\begin{proof}
    This follows the same steps as the proof of Lemma \ref{estimates a-mini-walls} (\cite[Lemma 4.8]{LoMar23}). Moreover,  Lemma \ref{estimates a-mini-walls} can be recovered from this proof setting $C_{[X]}=0$. 
    
    Since every $B$-twisted $H$-Gieseker semistable sheaf in $\Coh^{B,H}(X)$ is $\nu^{C_{[X]}}_{B,H,\alpha}$-semistable for $\alpha\gg 0$ (see Remark \ref{largevolumelimit} and Remark \ref{reparametrization}), the idea will be to show that, when varying $\alpha$, $E$ can only be destabilized for values of $\alpha$ smaller than the bound above.
    
    First, suppose that $E$ is $\mu_{B,H}$-semistable with $\mu_{B,H}(E)>0$, then the long exact sequence of cohomology for a destabilizing sequence $0\rightarrow A\rightarrow E\rightarrow Q\rightarrow 0$ in $\Coh^{B,H}(X)$ gives
$$
0\rightarrow \mathcal{H}^{-1}(Q)\rightarrow A\rightarrow E\rightarrow \mathcal{H}^0(Q)\rightarrow 0.
$$
   Since $E$ is a torsion-free sheaf, this shows that $A$ is a sheaf of positive rank and, moreover, $\mu_{B,H}(A)\leq \mu_{B,H}(E)$. Indeed, if $\mu_{B,H}(A)>\mu_{B,H}(E)$ and $A_1\hookrightarrow A$ is the first Harder-Narasimhan factor of $A$, then $\mu_{B,H}(A_1)>\mu_{B,H}(E)>0$ and so $A_1$ must be a subsheaf of $\mathcal{H}^{-1}(Q)$. This is a contradiction since $\mathcal{H}^{-1}(Q)\in \mathcal{F}_{B,H}$ and so $\mu_{B,H}$ is non-positive for each of its subobjets.
    
    Now, $H\ch_1^B(Q)>0$ since otherwise $A$ would not destabilize $E$. Combining this with Lemma \ref{equalslope}, we have shown 
    $$
    0<H\ch_1^B(A)<H\ch_1^B(E),\ \ \ 0<\mu_{B,H}(A)<\mu_{B,H}(E).
    $$
    Moreover, the wall produced by $A$ has equation
    $$
    \left(C_{[X]}+\alpha^2H^2\right)(\mu_{B,H}(E)-\mu_{B,H}(A))=\frac{2\ch_2^B(A)\mu_{B,H}(E)}{\ch_0(A)}-\frac{2\ch_2^B(E)\mu_{B,H}(A)}{\ch_0(E)}.
    $$
    Since $A$ is $\nu_{B,H,\alpha}^{C_{[X]}}$-semistable on such wall, Lemma \ref{numcharpDiscriminant} implies that
    $$
    \frac{2\ch_2^B(A)}{\ch_0(A)}\leq H^2\mu_{B,H}(A)^2+\frac{C_{[X]}}{H^2}.
    $$
    Thus,
    \footnotesize{
    \begin{align*}
        \left(C_{[X]}+\alpha^2H^2\right)(\mu_{B,H}(E)-\mu_{B,H}(A))&\leq \mu_{B,H}(A)\left(H^2\mu_{B,H}(A)\mu_{B,H}(E)-\frac{2\ch_2^B(E)}{\ch_0(E)}\right)+\frac{C_{[X]}\mu_{B,H}(E)}{H^2} \\
        &< \mu_{B,H}(A)\left(\frac{(H\ch_1^B(E))^2}{H^2\ch_0(E)^2}-\frac{2\ch_2^B(E)}{\ch_0(E)}\right)+\frac{C_{[X]}\mu_{B,H}(E)}{H^2} \\
        &= \mu_{B,H}(A)\left(\frac{\Delta_{B,H}(E)}{H^2\ch_0(E)^2}\right)+\frac{C_{[X]}\mu_{B,H}(E)}{H^2} \\
        &= \mu_{B,H}(A)\left(\frac{\widetilde{\Delta}_{B,H}(E)}{H^2\ch_0(E)^2}\right)+\frac{C_{[X]}}{H^2}(\mu_{B,H}(E)-\mu_{B,H}(A))\\
        &\leq \mu_{B,H}(A)\left(\frac{\widetilde{\Delta}_{B,H}(E)}{H^2\ch_0(E)^2}\right)+C_{[X]}(\mu_{B,H}(E)-\mu_{B,H}(A)).
    \end{align*}
    }
    \normalsize
    Therefore,
    \begin{align*}
        \alpha^2H^2(\mu_{B,H}(E)-\mu_{B,H}(A))&< \frac{\mu_{B,H}(E)\widetilde{\Delta}_{B,H}(E)}{H^2\ch_0(A)\ch_0(E)}.
    \end{align*}
    Now, since $H$ is integral and 
    $$
    \mu_{B,H}(E)-\mu_{B,H}(A)=\mu_{0,H}(E)-\mu_{0,H}(A)>0
    $$
    we conclude that
    $$
    \alpha^2H^2<\mu_{B,H}(E)\widetilde{\Delta}_{B,H}(E).
    $$
\end{proof}

One of the advantages of having several ways of measuring stability is that through Schur's lemma we can obtain the vanishing of certain cohomology groups. For instance, we are now ready for the proof of our main theorem:


\begin{proof}[Proof of Theorem \ref{mainthm}]
This is a direct application of Lemma \ref{schur}. The idea is to find a stability condition $\sigma=(Z,\mathcal{A})\in\mbox{Stab}(X)$ such that: 
    \begin{itemize}
    \item[(1)] $\displaystyle H,\ \mathcal{O}[1]\in \mathcal{A}$ are $\sigma$-stable,
    \item[(2)] $\displaystyle \mu_{Z}(H)>\mu_{Z}(\mathcal{O}[1])$.
    \end{itemize}
    Thus, we will have 
    $$
    H^1(H\otimes K_X)=\mbox{Ext}^1(\mathcal{O},H\otimes K_X)\cong\mbox{Ext}^1(H,\mathcal{O})^*=\mbox{Hom}(H,\mathcal{O}[1])^*=0.
    $$
    Then, it will be enough to find a values of $\alpha>0$ and $\beta$ such that for $B=\beta H$
    \begin{enumerate}
        \item $0<\beta<1$,\ \  so that $H,\mathcal{O}[1]\in \Coh^{B,H}(X)$;
        \item $H$ and $\mathcal{O}[1]$ are $\nu_{B,H,\alpha}^{C_{[X]}}$-stable;
        \item and $\displaystyle \frac{C_{[X]}}{H^2}+\alpha^2<\beta(1-\beta)$,\ \  so that $\nu_{B,H,\alpha}^{C_{[X]}}(H)>\nu_{B,H,\alpha}^{C_{[X]}}(\mathcal{O}[1])$.
        \end{enumerate}
   To get condition (2) we use Lemma \ref{estimates mini-walls char p} and Lemma \ref{duality} and obtain 
   \begin{itemize}
       \item[(i)] $H$ is  $\nu_{B,H,\alpha}^{C_{[X]}}$-stable if 
       $$
       \alpha^2H^2>(1-\beta)C_{[X]};
       $$
       \item[(ii)] $\mathcal{O}[1]$ is  $\nu_{B,H,\alpha}^{C_{[X]}}$-stable if 
       $$
       \alpha^2H^2>\beta C_{[X]}.
       $$
   \end{itemize}
   Taking $\beta=1/2$ we see that conditions (i) and (ii) are the same and so it is enough to show that there exists $\alpha>0$ such that
   $$
   \left(1-\frac{1}{2}\right)C_{[X]}<\alpha^2H^2<\frac{1}{2}\left(1-\frac{1}{2}\right)H^2-C.
   $$
   Now, by our assumption that $H^2>6C_{[X]}$ we know that
   $$
       \frac{1}{4}H^2-C>\frac{3}{2} C_{[X]}-C_{[X]}= \frac{1}{2}C_{[X]}.
   $$
   Thus, it is always possible to find the required $\alpha$.
\end{proof}

\begin{remark}
    Theorem 2.15 agrees with the results in \cite{XieCounter}, stating that counterexamples for the Kodaira Vanishing Theorem can only exist when $C_{[X]}>0$. See also \cite{Enokizono2021}, \cite{DiCerbo15}, and \cite{ZHANG23}.
\end{remark}

\bibliographystyle{hep}
\bibliography{refs.bib}

\end{document}